\documentclass[10pt]{article}

\usepackage{latexsym,amsfonts,amsmath,amssymb,mathrsfs,url,color,amsthm}
\usepackage{graphicx}

\newtheorem{thm}{Theorem}
\newtheorem{lem}{Lemma}
\newtheorem{rem}{Remark}
\newtheorem{prop}{Proposition}
\newtheorem{defn}{Definition}

\newcommand{\rd}{\,\mathrm{d}}
\newcommand{\bsc}{\boldsymbol{c}}
\newcommand{\bsk}{\boldsymbol{k}}
\newcommand{\bsx}{\boldsymbol{x}}
\newcommand{\bsy}{\boldsymbol{y}}
\newcommand{\bszero}{\boldsymbol{0}}

\newcommand{\nat}{\mathbb{N}}
\newcommand{\RR}{\mathbb{R}}

\newcommand{\ZZ}{\mathbb{Z}}

\allowdisplaybreaks

\begin{document}

\title{On the $L_p$ discrepancy of two-dimensional folded Hammersley point sets}

\author{Takashi Goda\thanks{Graduate School of Engineering, The University of Tokyo, 7-3-1 Hongo, Bunkyo-ku, Tokyo 113-8656, Japan (\tt{goda@frcer.t.u-tokyo.ac.jp})}}

\date{\today}

\maketitle

\begin{abstract}
We give an explicit construction of two-dimensional point sets whose $L_p$ discrepancy is of best possible order for all $1\le p\le \infty$. It is provided by folding Hammersley point sets in base $b$ by means of the $b$-adic baker's transformation which has been introduced by Hickernell (2002) for $b=2$ and Goda, Suzuki and Yoshiki (2013) for arbitrary $b\in \nat$, $b\ge 2$. We prove that both the minimum Niederreiter-Rosenbloom-Tsfasman weight and the minimum Dick weight of folded Hammersley point sets are large enough to achieve the best possible order of $L_p$ discrepancy for all $1\le p\le \infty$.
\end{abstract}

%%%%%%%%%%%%%%%%%%%%%%%%%%%%%%%%%%%%%%%%%%%%%%%%%%%%%%%%%%%
%%%%%%%%%%%%%%%%%%%%%%%%%%%%%%%%%%%%%%%%%%%%%%%%%%%%%%%%%%%
%%%%%%%%%%%%%%%%%%%%%%%%%%%%%%%%%%%%%%%%%%%%%%%%%%%%%%%%%%%
\section{Introduction and the main result}
\label{Intro}
Let $P\subset [0,1)^2$ be a finite point set in the half-open unit-square. For $\bsy=(y_1,y_2)\in [0,1)^2$, we denote by $[\bszero,\bsy)$ the anchored axis-parallel rectangle $[0,y_1)\times [0,y_2)$. The local discrepancy function $\Delta_P:[0,1)^2\to \RR$ is defined as
  \begin{align*}
    \Delta_P(\bsy) := \frac{1}{|P|}\sum_{\bsx\in P}\chi_{[\bszero,\bsy)}(\bsx)-y_1y_2 ,
  \end{align*}
where $|P|$ denotes the cardinality of $P$ counted with multiplicity, and $\chi_{[\bszero,\bsy)}$ denotes the characteristic function of $[\bszero,\bsy)$. Then, for $1\le p<\infty$, we define the $L_p$ discrepancy of $P$ as the $L_p$-norm of $\Delta_P$, that is,
  \begin{align*}
    L_p(P) := \left( \int_{[0,1)^2}\lvert \Delta_P(\bsy)\rvert^p \rd \bsy\right)^{1/p} .
  \end{align*}
For $p=\infty$, we define the $L_{\infty}$ discrepancy of $P$ as
  \begin{align*}
    L_{\infty}(P) := \sup_{\bsy\in [0,1)^2}\lvert \Delta_P(\bsy)\rvert .
  \end{align*}
Although the extension of this concept to an arbitrary number of dimensions is obvious, we concentrate on the two-dimensional case in this note.

For $1\le p<\infty$, we know that there exists a constant $c_p>0$ such that for any positive integer $N$ and any point set $P$ with $|P|=N$ we have
  \begin{align*}
    L_p(P) \ge c_p \frac{\sqrt{\log N}}{N} .
  \end{align*}
The proofs were given by Roth \cite{R54} for $p=2$, by Schmidt \cite{S77} for $1<p<\infty$, and by Hal\'{a}sz \cite{H81} for $p=1$. It is also known from \cite{C80,R80} that this lower bound is the best possible, apart from the value of the constant. 

For the endpoint case $p=\infty$, we know from \cite{S72} that there exists a constant $c_{\infty}>0$ such that for any positive integer $N$ and any point set $P$ with $|P|=N$ we have
  \begin{align*}
    L_{\infty}(P) \ge c_{\infty} \frac{\log N}{N} ,
  \end{align*}
which is sharp, apart from the value of the constant. Two-dimensional Hammersley point sets, introduced below, are the well-known example of point sets whose $L_{\infty}$ discrepancy is of best possible order.

Explicit construction of point sets which achieve the best possible order of the $L_p$ discrepancy has long been of great interest. We refer to the works of Chen and Skriganov \cite{CS02} and Skriganov \cite{S06} for the first explicit construction of point sets in fixed dimension with optimal $L_2$ and optimal $L_p$ discrepancy for $1<p<\infty$, respectively. Recently, explicit construction of infinite-dimensional sequences with optimal $L_2$ and optimal $L_p$ discrepancy for $1<p<\infty$ has been provided by Dick and Pillichshammer \cite{DP14} and Dick \cite{D14}, respectively.

In the two-dimensional case, which we consider in this note, the $L_p$ discrepancy of two-dimensional Hammersley point sets and their variants has been extensively studied in the literature, see for example \cite{FP09mm,HZ69,LP01,P02,W75}. We also refer to \cite[Section~6]{B11} for a brief survey on this topic. Here two-dimensional Hammersley point sets are defined as follows.

\begin{defn}\label{def:hps}
For $b\ge 2$, $b\in \nat$ and $m\in \nat$, the two-dimensional Hammersley point set in base $b$ consisting of $b^m$ points is defined by
  \begin{align*}
    P_{m} := \left\{ \left( \frac{a_1}{b}+\dots +\frac{a_m}{b^m}, \frac{a_m}{b}+\dots +\frac{a_1}{b^m} \right) \colon a_i\in \{0,1,\dots,b-1\}\right\} .
  \end{align*}
\end{defn}

It was shown in \cite{HZ69,W75} that the $L_2$ discrepancy of Hammersley point sets in base $b$ is {\em not} of best possible order. This negative result also holds true for the $L_p$ discrepancy for all $1\le p<\infty$, as shown in \cite{FP09mm,P02}. In order to overcome this problem, there have been several variants of Hammersley point sets, such as digitally shifted ones and symmetrized ones, such that the $L_p$ discrepancy of those point sets becomes of best possible order, see \cite[Section~6]{B11}.

This note provides a new variant of Hammersley point sets, which we call {\em folded Hammersley point sets}, whose $L_p$ discrepancy is of best possible order for all $1\le p\le \infty$. Folded Hammersley point sets are defined as follows.

\begin{defn}\label{def:fhps}
For $b\ge 2$, $b\in \nat$ and $m,n\in \nat$ such that $n\ge 2m$, the two-dimensional folded Hammersley point set in base $b$ consisting of $b^m$ points is defined by
  \begin{align*}
    P_{m,\tau_n} & := \Big\{ \Big( \frac{a_2\ominus a_1}{b}+\dots +\frac{a_m\ominus a_1}{b^{m-1}}+\frac{\ominus a_1}{b^m}\dots +\frac{\ominus a_1}{b^n}, \\
    & \qquad \quad \frac{a_{m-1}\ominus a_m}{b}+\dots +\frac{a_1\ominus a_m}{b^{m-1}}+\frac{\ominus a_m}{b^m}\dots +\frac{\ominus a_m}{b^n} \Big) \\
    & \qquad \qquad \colon a_i\in \{0,1,\dots,b-1\} \Big\} ,
  \end{align*}
where $\ominus$ denotes the subtraction modulo $b$, and $\ominus a=0\ominus a$.
\end{defn}

Here we note that $P_{m,\tau_n}$ is obtained by applying the {\em $b$-adic baker's transformation of depth $n$} to $P_m$. For $x\in [0,1)$ whose $b$-adic expansion is given by $x=\sum_{i=1}^{\infty}\xi_ib^{-i}$, which is unique in the sense that infinitely many of the $\xi_i$ are different from $b-1$, the $b$-adic baker's transformation of depth $n$ is defined as
  \begin{align*}
    \tau_{n}(x) := \sum_{i=1}^{n}\frac{\xi_{i+1}\ominus \xi_1}{b^i}.
  \end{align*}
It is straightforward to confirm that $P_{m,\tau_n}=\{\tau_n(\bsx)\colon \bsx\in P_m\}$, where $\tau_n$ is applied componentwise.

\begin{rem}
When $n=\infty$, $\tau_n$ corresponds to the original $b$-adic baker's transformation introduced in \cite{GSY1}, which is a generalization of the baker's transformation given by Hickernell \cite{H02} for $b=2$. The term ``folded'' originates from the fact that $\tau_{\infty}$ emulates how baker stretches and folds bread dough for $b=2$, see \cite{H02}. Due to some technical reason, however, we only consider the case that $n$ is finite in this note.
\end{rem}

Now we are ready to introduce the main result of this note.

\begin{thm}\label{thm:main}
For $b\ge 2$, $b\in \nat$ and $m,n\in \nat$ such that $n\ge 2m$, let $P_{m,\tau_n}$ be the folded Hammersley point set in base $b$. Then, for all $1\le p\le \infty$, the $L_p$ discrepancy of $P_{m,\tau_n}$ is of the best possible order. Namely, there exists a constant $C_p>0$ such that
  \begin{align*}
    L_p(P_{m,\tau_n}) \le C_p \frac{\sqrt{m}}{b^m},
  \end{align*}
for $1\le p<\infty$, and
  \begin{align*}
    L_{\infty}(P_{m,\tau_n}) \le C_{\infty} \frac{m}{b^m}.
  \end{align*}
\end{thm}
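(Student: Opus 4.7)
The plan is to regard $P_{m,\tau_n}$ as a digital net over $\FF_b$ and bound its $L_p$ discrepancy via two weight functions on its dual net: the minimum Niederreiter--Rosenbloom--Tsfasman (NRT) weight, which governs the endpoint $p=\infty$, and the minimum Dick weight, which governs the range $1\le p<\infty$. As the abstract indicates, both minimum weights are known to be too small for the plain Hammersley net $P_m$, but folding by the $b$-adic baker's transformation $\tau_n$ with $n\ge 2m$ should lift them into the range that forces best-possible order.

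First I would write down the generating matrices $C_1,C_2\in \FF_b^{n\times m}$ of $P_{m,\tau_n}$ explicitly by unfolding Definition \ref{def:fhps}. Because $\tau_n$ acts $\FF_b$-linearly on digit vectors, sending $(\xi_1,\xi_2,\dots,\xi_{n+1})$ to $(\xi_2\ominus\xi_1,\dots,\xi_{n+1}\ominus\xi_1)$, it composes cleanly with the identity and digit-reversal generators of the Hammersley construction and yields $C_1,C_2$ whose row structure transparently exhibits the ``top row added to every other row'' effect of folding. The hypothesis $n\ge 2m$ ensures enough depth to accommodate the folded digits without truncation.

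With this digital description in hand, for $p=\infty$ I would apply Niederreiter's classical estimate bounding the star discrepancy of a digital net by a sum over nonzero dual codewords $\bsk$ of terms decaying in $\rho_{\mathrm{NRT}}(\bsk)$. For $1\le p<\infty$ I would invoke a Dick-type bound (in the spirit of Chen--Skriganov and Dick--Pillichshammer) that controls $L_p$ discrepancy via the Walsh coefficients of the local discrepancy function, which are in turn governed by $\delta_{\mathrm{Dick}}(\bsk)$ summed over the dual. Reducing the theorem therefore comes down to showing that the minimum NRT weight and the minimum Dick weight of the dual of $P_{m,\tau_n}$ are both of linear order in $m$.

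The hardest step is precisely these two weight lower bounds. The plain Hammersley dual contains vectors of very small weight (this is the source of its suboptimal $L_p$ discrepancy), but under the adjoint action of $\tau_n$ each such low-weight vector is pulled back to one in which the first digit of each component cascades into many subsequent digits, either forcing a nonzero entry at a coordinate of index close to $n$ (boosting NRT weight) or spreading nonzero entries across both components (boosting Dick weight). A careful case analysis on the support pattern of $\bsk=(\bsk^{(1)},\bsk^{(2)})$, using the digit-reversal structure of the second Hammersley component together with the depth-$n$ cascade of $\tau_n$, should give the desired linear-in-$m$ lower bound; the hypothesis $n\ge 2m$ enters here to guarantee that the cascade reaches far enough and that no wrap-around cancellation between the two components occurs.
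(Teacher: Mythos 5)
Your plan coincides with the paper's proof: write down the generating matrices of $P_{m,\tau_n}$ explicitly, bound $L_\infty$ via Niederreiter's estimate through the minimum NRT weight (Proposition~\ref{prop:NRT}) and $L_p$ for $1\le p<\infty$ via the Dick--Markhasin bound through the minimum Dick weight (Proposition~\ref{prop:DM}), and reduce everything to linear-in-$m$ lower bounds on those two weights. The paper carries out the case analysis you defer on the primal rather than the dual side, proving linear independence of suitable collections of row vectors of $C_1,C_2$ (Lemma~\ref{lem:linear}) and feeding this into the duality criteria (Lemmas~\ref{lem:NRT} and \ref{lem:dick}) to get $\rho_1(P_{m,\tau_n})>m-1$ and $\rho_2(P_{m,\tau_n})>2m-3$; this independence argument is where essentially all the work of the proof lies, so your outline is correct in structure but leaves the substantive step unexecuted.
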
 

%%%%%%%%%%%%%%%%%%%%%%%%%%%%%%%%%%%%%%%%%%%%%%%%%%%%%%%%%%%
%%%%%%%%%%%%%%%%%%%%%%%%%%%%%%%%%%%%%%%%%%%%%%%%%%%%%%%%%%%
%%%%%%%%%%%%%%%%%%%%%%%%%%%%%%%%%%%%%%%%%%%%%%%%%%%%%%%%%%%
\section{Preliminaries}

Throughout this note, we shall use the following notation. Let $\nat$ be the set of positive integers, and let $\nat_0=\nat\cup \{0\}$. The operators $\oplus$ and $\ominus$ denotes the addition and subtraction modulo $b$, respectively. $\ZZ_b$ denotes a finite ring containing $b$ elements. For simplicity, we identify the elements of $\ZZ_b$ with the integers $0,1,\dots,b-1\in \ZZ$.

%%%%%%%%%%%%%%%%%%%%%%%%%%%%%%%%%%%%%%%%%%%%%%%%%%%%%%%%%%%
\subsection{Digital nets}

For $b\ge 2$, $b\in \nat$ and $m,n\in \nat$ with $m\le n$, let $C_1,C_2$ be $n\times m$ matrices over $\ZZ_b$. For $a_1,\dots,a_m\in \ZZ_b$, let $\vec{a}$ denote the vector $(a_1,\dots,a_m)^{\top}$ and let $a=a_1+a_2b+\dots+a_mb^{m-1}$. For $j=1,2$, we define
  \begin{align*}
    \vec{y}_{a,j} = C_j \vec{a} ,
  \end{align*}
where $\vec{y}_{a,j}=(y_{a,j,1},\dots,y_{a,j,n})^{\top}\in \ZZ_b^{n}$, and
  \begin{align*}
    x_{a,j} = \frac{y_{a,j,1}}{b}+\dots+\frac{y_{a,j,n}}{b^n} \in [0,1).
  \end{align*}
Then the point set $P=\{\bsx_0,\dots,\bsx_{b^m-1}\}$, where $\bsx_{a}=(x_{a,1},x_{a,2})$, is called a (two-dimensional) digital net (over $\ZZ_b$ with generating matrices $C_1,C_2$).

From Definition \ref{def:hps}, it is obvious that the two-dimensional Hammersley point set $P_m$ is a digital net over $\ZZ_b$ with generating matrices of size $m\times m$
  \begin{align*}
    C_1 = \left(
    \begin{array}{cccc}
      1 & 0 & \cdots & 0 \\
      0 & 1 & \cdots & 0 \\
      \vdots & \vdots & \ddots & \vdots \\
      0 & 0 & \cdots & 1 
    \end{array} \right) ,
    C_2 = \left(
    \begin{array}{cccc}
      0 & \cdots & 0 & 1 \\
      0 & \cdots & 1 & 0 \\
      \vdots & \text{\reflectbox{$\ddots$}} & \vdots & \vdots \\
      1 & \cdots & 0 & 0 
    \end{array} \right) .
  \end{align*}
From Definition \ref{def:fhps}, to which we apply the identity $a\ominus c= a\oplus (b-1)c \in \ZZ_b$ for all $a,c\in \ZZ_b$, we can see that the two-dimensional folded Hammersley point set $P_{m,\tau_n}$ is a digital net over $\ZZ_b$ with generating matrices of size $n\times m$
  \begin{align}\label{eq:matrix_fhps}
    C_1 = \left(
    \begin{array}{ccccc}
      b-1 & 1 & 0 & \cdots & 0 \\
      b-1 & 0 & 1 & \cdots & 0 \\
      \vdots & \vdots & \vdots & \ddots & \vdots \\
      b-1 & 0 & 0 & \cdots & 1 \\
      b-1 & 0 & 0 & \cdots & 0 \\
      \vdots & \vdots & \vdots & \ddots & \vdots \\
      b-1 & 0 & 0 & \cdots & 0 
    \end{array} \right) ,
    C_2 = \left(
    \begin{array}{ccccc}
      0 & \cdots & 0 & 1 & b-1 \\
      0 & \cdots & 1 & 0 & b-1 \\
      \vdots & \text{\reflectbox{$\ddots$}} & \vdots & \vdots & \vdots \\
      1 & \cdots & 0 & 0 & b-1 \\
      0 & \cdots & 0 & 0 & b-1 \\
      \vdots & \text{\reflectbox{$\ddots$}} & \vdots & \vdots & \vdots \\
      0 & \cdots & 0 & 0 & b-1 
    \end{array} \right) .
  \end{align}

For a digital net $P$, its dual net $P^{\perp}$ is defined as follows.
\begin{defn}\label{def:dual}
For $m,n\in \nat$ with $m\le n$, let $P\subset [0,1)^2$ be a digital net over $\ZZ_b$ with generating matrices $C_1,C_2$ of size $n\times m$. Then its dual net $P^{\perp}\subset \nat_0^2$ is defined as
  \begin{align*}
    P^{\perp} := \{(k_1,k_2)\in \nat_0^2\colon C_1^{\top}\vec{k}_1\oplus C_2^{\top}\vec{k}_2=\bszero \},
  \end{align*}
where we use the following notation: For $k\in \nat_0$ whose $b$-adic expansion is given by $k=\sum_{i=0}^{\infty}\kappa_ib^i$, which is actually a finite expansion, we write $\vec{k}=(\kappa_0,\kappa_1,\dots,\kappa_{n-1})^{\top}\in \ZZ_b^n$. The operator $\oplus$ is applied componentwise.
\end{defn}

From the definition of digital nets, it is clear that the properties of a given digital net depend totally on its generating matrices. In the following, we introduce two weight functions, which serve as a quality measure of generating matrices. One is the Niederreiter-Rosenbloom-Tsfasman (NRT) weight function $\mu_1$ given in \cite{N86,RT97}, and the other is the Dick weight function $\mu_2$ given in \cite{D08}.

%%%%%%%%%%%%%%%%%%%%%%%%%%%%%%%%%%%%%%%%%%%%%%%%%%%%%%%%%%%
\subsection{Niederreiter-Rosenbloom-Tsfasman weight}

\begin{defn}\label{def:nrt_weight}
For $k\in \nat$, we denote its $b$-adic expansion by $k=\kappa_1b^{a_1-1}+\kappa_2b^{a_2-1}+\dots+\kappa_vb^{a_v-1}$ such that $\kappa_1,\dots,\kappa_v\in \{1,\dots,b-1\}$ and $a_1>a_2>\dots > a_v>0$. Then the Niederreiter-Rosenbloom-Tsfasman (NRT) weight function $\mu_1$ is defined as
  \begin{align*}
    \mu_1(k) := \begin{cases}
    a_1     & \text{if}\quad k>0, \\
    0       & \text{if}\quad k=0.
    \end{cases}
  \end{align*}
For vectors $\bsk=(k_1,k_2)\in \nat_0^2$, we define $\mu_1(\bsk):=\mu_1(k_1)+\mu_1(k_2)$.
\end{defn}

For a digital net $P$, we define the minimum NRT weight $\rho_1(P)$ as
  \begin{align*}
    \rho_1(P) := \min_{\bsk\in P^{\perp}\setminus \{(0,0)\}}\mu_1(\bsk) .
  \end{align*}
The following lemma shows how the minimum NRT weight connects with a structure of generating matrices of a digital net $P$, see for example \cite[Theorems~4.52 and 7.8]{DP10}.

\begin{lem}\label{lem:NRT}
For $m,n\in \nat$ with $m\le n$, let $P$ be a digital net over $\ZZ_b$ with generating matrices $C_1,C_2$ of size $n\times m$. Let $\rho$ be a positive integer such that for any choice of $d_1,d_2\in \nat_0$ with $d_1+d_2=\rho$, the first $d_1$ row vectors of $C_1$ and the first $d_2$ row vectors of $C_2$ are linearly independent over $\ZZ_b$. Then we have $\rho_1(P)>\rho$.
\end{lem}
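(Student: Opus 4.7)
The plan is to prove the contrapositive: assuming there exists $\bsk=(k_1,k_2)\in P^{\perp}\setminus\{(0,0)\}$ with $\mu_1(\bsk)\le \rho$, I will produce a nontrivial linear dependence among the first $d_1$ rows of $C_1$ and the first $d_2$ rows of $C_2$ for some choice of $d_1+d_2=\rho$, contradicting the hypothesis.

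First I would translate the NRT weight into the language of $b$-adic digit vectors. By Definition \ref{def:nrt_weight}, writing $a_j:=\mu_1(k_j)$ for $j=1,2$, the digit vector $\vec{k}_j=(\kappa_{j,0},\dots,\kappa_{j,n-1})^{\top}$ is supported in the first $a_j$ coordinates, with $\kappa_{j,a_j-1}\neq 0$ whenever $k_j>0$. Denoting by $\bsc_{j,l}\in \ZZ_b^m$ the $l$-th row of $C_j$ viewed as a column vector, one then has
\begin{align*}
C_j^{\top}\vec{k}_j \;=\; \sum_{i=0}^{a_j-1}\kappa_{j,i}\,\bsc_{j,i+1},
\end{align*}
that is, only the first $a_j$ rows of $C_j$ appear in $C_j^{\top}\vec{k}_j$.

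Next, using $\bsk\in P^{\perp}$, Definition \ref{def:dual} gives $C_1^{\top}\vec{k}_1\oplus C_2^{\top}\vec{k}_2=\bszero$, which rewrites as the linear relation
\begin{align*}
\sum_{i=0}^{a_1-1}\kappa_{1,i}\,\bsc_{1,i+1}\;\oplus\;\sum_{i=0}^{a_2-1}\kappa_{2,i}\,\bsc_{2,i+1} \;=\; \bszero
\end{align*}
over $\ZZ_b$. This relation is nontrivial: since $\bsk\neq (0,0)$, at least one of $k_1,k_2$ is positive, whence the coefficient $\kappa_{j,a_j-1}$ (the top digit of the nonzero coordinate) is nonzero. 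Thus the first $a_1$ rows of $C_1$ together with the first $a_2$ rows of $C_2$ are linearly dependent over $\ZZ_b$.

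Finally, I would lift this to a dependence over a pair $(d_1,d_2)$ with $d_1+d_2=\rho$. Since $a_1+a_2=\mu_1(\bsk)\le \rho$, I can pick any $d_1\ge a_1$ and $d_2\ge a_2$ with $d_1+d_2=\rho$ (such a choice exists because $\rho\le m\le n$ is implicit in the hypothesis). Padding the above relation with zero coefficients on the additional rows yields a nontrivial linear dependence among the first $d_1$ rows of $C_1$ and the first $d_2$ rows of $C_2$, contradicting the hypothesis. Hence no such $\bsk$ exists and $\rho_1(P)>\rho$. The only step that demands any care is the bookkeeping of indices (the shift between the $b$-adic index $i$ starting at $0$ and the row index $i+1$ starting at $1$) and checking that nontriviality of the dependence really follows from $\bsk\neq(0,0)$; I do not expect a substantive obstacle beyond this.
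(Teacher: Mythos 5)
Your argument is correct and complete: the paper itself gives no proof of this lemma, deferring instead to \cite[Theorems~4.52 and 7.8]{DP10}, so there is nothing internal to compare against. Your contrapositive via the dual net is the standard duality argument underlying those cited results: a nonzero $\bsk\in P^{\perp}$ with $\mu_1(\bsk)=a_1+a_2\le\rho$ yields, through $C_1^{\top}\vec{k}_1\oplus C_2^{\top}\vec{k}_2=\bszero$, a dependence among the first $a_1$ rows of $C_1$ and the first $a_2$ rows of $C_2$ whose nontriviality is guaranteed by the leading digit of the nonzero coordinate. The two points needing care are exactly the ones you flag: the index shift between $\kappa_{j,i}$ and row $i+1$, and the fact that $a_1+a_2\le\rho\le m\le n$ (the latter forced by the hypothesis with $d_1=\rho$, $d_2=0$), which ensures no digits of $k_j$ are truncated in $\vec{k}_j$ and that the padded pair $(d_1,d_2)$ indexes actual rows. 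No gap.
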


The following proposition adapted from a result of Niederreiter \cite[Theorem~3.6]{N87} shows that the $L_{\infty}$ discrepancy of a (two-dimensional) digital net with large minimum NRT weight is of best possible order.

\begin{prop}\label{prop:NRT}
Let $P$ be a digital net over $\ZZ_b$ consisting of $b^m$ points which satisfies $\rho_1(P)> m-t$ for some integer $0\le t\le m$. Then there exists a constant $C_{t}$ which depends only on $t$ such that we have
  \begin{align*}
    L_{\infty}(P) \le C_t \frac{m}{b^m}.
  \end{align*}
\end{prop}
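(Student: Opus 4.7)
The strategy is to identify the hypothesis $\rho_1(P) > m - t$ with the statement that $P$ is a digital $(t, m, 2)$-net in base $b$, so that Niederreiter's classical bound on the star discrepancy of $(t, m, s)$-nets applies essentially off-the-shelf.

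First, I would use the duality between $P$ and $P^\perp$ (Lemma \ref{lem:NRT}, combined with its straightforward converse, which is obtained by the same linear-algebra argument) to translate the NRT-weight hypothesis into the following concrete statement about the generating matrices: for every pair $d_1, d_2 \in \nat_0$ with $d_1 + d_2 \le m - t$, the first $d_1$ rows of $C_1$ together with the first $d_2$ rows of $C_2$ form a linearly independent family of vectors in $\ZZ_b^m$. A standard counting argument then shows that this condition is equivalent to the assertion that every elementary $b$-adic box
$$ \prod_{j=1}^{2}\left[\frac{a_j}{b^{d_j}}, \frac{a_j+1}{b^{d_j}}\right), \qquad 0 \le a_j < b^{d_j}, \qquad d_1 + d_2 = m - t, $$
contains exactly $b^t$ points of $P$; in other words, $P$ is a $(t, m, 2)$-net in base $b$.

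With this identification in hand, I would invoke Niederreiter's Theorem~3.6 of \cite{N87}, which bounds the star discrepancy of an arbitrary $(t, m, s)$-net in base $b$ by a leading term of order $b^t (\log N)^{s-1}/N$ with a constant depending only on $s$, $t$, and $b$. Specializing to $s = 2$, and using $\log N = m \log b$ for $N = b^m$, yields
$$ L_\infty(P) = D_N^*(P) \le C_t \frac{m}{b^m}, $$
where the first equality is immediate from the definitions in Section~\ref{Intro}: both $L_\infty(P)$ and the classical star discrepancy are suprema of $|\Delta_P(\bsy)|$ over anchored boxes $[\bszero, \bsy)$.

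The main obstacle here is not mathematical but bookkeeping: one must verify that Niederreiter's normalization of star discrepancy coincides with ours and that all dependence on $t$ (and on $b$, which is fixed throughout) can be absorbed into a single constant $C_t$. Both points are routine and fully documented, e.g., in \cite{DP10}, so the proof reduces to quoting the appropriate theorem after the NRT-weight-to-$(t,m,2)$-net translation is in place.
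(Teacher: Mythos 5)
Your proposal is correct and follows exactly the route the paper intends: the paper gives no proof of Proposition~\ref{prop:NRT}, merely citing Niederreiter's Theorem~3.6 of \cite{N87}, and your translation of $\rho_1(P)>m-t$ into the digital $(t,m,2)$-net property followed by Niederreiter's star discrepancy bound is precisely the standard adaptation being invoked. The bookkeeping points you flag (the converse of Lemma~\ref{lem:NRT}, the normalization of $L_\infty$ versus $D_N^*$, and absorbing the fixed base $b$ into $C_t$) are all handled correctly.
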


%%%%%%%%%%%%%%%%%%%%%%%%%%%%%%%%%%%%%%%%%%%%%%%%%%%%%%%%%%%
\subsection{Dick weight}

\begin{defn}\label{def:dick_weight}
For $k\in \nat$, we denote its $b$-adic expansion by $k=\kappa_1b^{a_1-1}+\kappa_2b^{a_2-1}+\dots+\kappa_vb^{a_v-1}$ such that $\kappa_1,\dots,\kappa_v\in \{1,\dots,b-1\}$ and $a_1>a_2>\dots > a_v>0$. Then the Dick weight function $\mu_2$ is defined as
  \begin{align*}
    \mu_2(k) = \begin{cases}
    a_1+a_2 & \text{if}\quad v\ge 2, \\
    a_1     & \text{if}\quad v=1, \\
    0       & \text{if}\quad k=0.
    \end{cases}
  \end{align*}
For vectors $\bsk=(k_1,k_2)\in \nat_0^2$, we define $\mu_2(\bsk):=\mu_2(k_1)+\mu_2(k_2)$.
\end{defn}

For a digital net $P$, we define the minimum Dick weight $\rho_2(P)$ as
  \begin{align*}
    \rho_2(P) := \min_{\bsk\in P^{\perp}\setminus \{(0,0)\}}\mu_2(\bsk) .
  \end{align*}
As with Lemma \ref{lem:NRT}, the following lemma shows how the minimum Dick weight connects with a structure of generating matrices of a digital net $P$, see \cite[Chapter~15]{D14} and \cite[Lemma~4.3]{M14}.

\begin{lem}\label{lem:dick}
For $m,n\in \nat$ with $n\ge 2m$, let $P$ be a digital net over $\ZZ_b$ with generating matrices $C_1,C_2$ of size $n\times m$. For $j=1,2$ and $1\le l\le n$, let $\bsc_{j,l}$ denote the $l$-th row vector of $C_j$. Let $\rho$ be a positive integer such that for all $1\le i_{1,v_1}<\dots<i_{1,1}\le 2m$ and $1\le i_{2,v_2}<\dots<i_{2,1}\le 2m$ with $v_j\in \nat_0$ and
  \begin{align*}
    \sum_{l=1}^{\min(v_1,2)}i_{1,l}+\sum_{l=1}^{\min(v_2,2)}i_{2,l}\le \rho ,
  \end{align*}
the vectors $\bsc_{1,i_{1,v_1}},\dots,\bsc_{1,i_{1,1}},\bsc_{2,i_{2,v_2}},\dots,\bsc_{2,i_{2,1}}$ are linearly independent over $\ZZ_b$. Then we have $\rho_2(P)>\rho$.
\end{lem}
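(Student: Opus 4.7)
The overall strategy is to pass through the generating-matrix description of $P_{m,\tau_n}$ given in (\ref{eq:matrix_fhps}) and quantify the two quality measures $\rho_1$ and $\rho_2$ via Lemmas \ref{lem:NRT} and \ref{lem:dick}. The $L_\infty$ bound then follows immediately from Proposition \ref{prop:NRT}, and the $L_p$ bound for $1 \le p < \infty$ follows by combining the resulting $\rho_2$ estimate with a Walsh-analytic $L_p$-discrepancy bound for digital nets of the type developed by Dick in \cite{D08,D14}.

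For the $L_\infty$ bound, I would show $\rho_1(P_{m,\tau_n}) > m-1$. Given any decomposition $d_1 + d_2 = m-1$ with $d_1, d_2 \ge 0$, the first $d_1$ rows of $C_1$ are $(b-1)e_1 + e_{l+1}$ for $l = 1, \ldots, d_1$, where $e_j$ denotes the $j$-th standard basis vector of $\ZZ_b^m$; similarly, the first $d_2$ rows of $C_2$ are $e_{m-l} + (b-1)e_m$ for $l = 1, \ldots, d_2$. The ``interior'' unit-vector columns $\{2,\ldots,d_1+1\}$ and $\{m-d_2,\ldots,m-1\}$ overlap in at most one position (precisely when $d_1+1 = m-d_2$), while the first and last columns are touched only by the $C_1$- and $C_2$-rows respectively. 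Reading any vanishing linear combination column by column then forces every coefficient to vanish, using the invertibility of $b-1$ in $\ZZ_b$ on the $e_1$- and $e_m$-columns. Hence $\rho_1(P_{m,\tau_n}) \ge m$, and Proposition \ref{prop:NRT} with $t = 1$ yields $L_\infty(P_{m,\tau_n}) \le C_1 \, m/b^m$.

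For the $L_p$ bound with $1 \le p < \infty$, I would apply Lemma \ref{lem:dick} at a value of $\rho$ of order $m$ (large enough to feed into the Walsh-analytic machinery of \cite{D08,D14} and produce order $\sqrt{m}/b^m$). The key structural observation is that rows $m, m+1, \ldots, n$ of $C_1$ all equal $(b-1)e_1$, and analogously rows $m, \ldots, n$ of $C_2$ all equal $(b-1)e_m$. Consequently, if the selected indices $i_{j,v_j} < \ldots < i_{j,1}$ from $C_j$ contain two values $\ge m$, two of the selected rows coincide and the whole collection is automatically linearly dependent; so at most one such ``tail index'' per $C_j$ is admissible. Combined with the weight constraint $\sum_{l=1}^{\min(v_j,2)} i_{j,l} \le \rho$, this leaves a bounded number of configuration types---parametrised by the subset of ``diagonal'' rows (indices $\le m-1$) chosen from each $C_j$ together with at most one tail index per $C_j$---and each is handled by a column-by-column argument analogous to the NRT case above.

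The main obstacle will be this Dick-weight case analysis. The weight constraint in Lemma \ref{lem:dick} only caps the two \emph{largest} selected indices in each $C_j$, so indices $i_{j,l}$ with $l \ge 3$ may range freely up to $2m$; these ``invisible'' indices can nevertheless create dependences with the visible top two. Correctly bookkeeping the interactions between the diagonal rows of $C_1$, the diagonal rows of $C_2$, and at most one tail-row per matrix---while ruling out all non-trivial combinations---is the heart of the argument, and is where the precise value of $\rho$ produced by Lemma \ref{lem:dick}, and thus the constant $C_p$, is pinned down.
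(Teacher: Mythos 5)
Your proposal does not address the statement at hand. Lemma \ref{lem:dick} is a general criterion for an \emph{arbitrary} digital net: it asserts that a certain family of linear-independence conditions on the rows of $C_1,C_2$ forces $\rho_2(P)>\rho$. What you have written is instead a proof sketch for Theorem \ref{thm:main} (the discrepancy bounds for the folded Hammersley point sets), in which Lemma \ref{lem:dick} is \emph{used} as a black box (``I would apply Lemma \ref{lem:dick} at a value of $\rho$ of order $m$''). Nothing in your text establishes the implication from the hypothesis of the lemma to the conclusion $\rho_2(P)>\rho$; indeed, the specific structure of the matrices in \eqref{eq:matrix_fhps}, on which your entire argument rests, is irrelevant to the lemma, which holds for any generating matrices of the stated size.

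A proof of the lemma itself goes through the dual net. Suppose $\rho_2(P)\le\rho$, so there is $\bsk=(k_1,k_2)\in P^{\perp}\setminus\{(0,0)\}$ with $\mu_2(k_1)+\mu_2(k_2)\le\rho$. Writing the nonzero $b$-adic digits of $k_j$ at positions $i_{j,1}>i_{j,2}>\dots>i_{j,v_j}$, so that $\mu_2(k_j)=\sum_{l=1}^{\min(v_j,2)}i_{j,l}$, the defining relation $C_1^{\top}\vec{k}_1\oplus C_2^{\top}\vec{k}_2=\bszero$ of Definition \ref{def:dual} is precisely a vanishing linear combination of the rows $\bsc_{1,i_{1,v_1}},\dots,\bsc_{1,i_{1,1}},\bsc_{2,i_{2,v_2}},\dots,\bsc_{2,i_{2,1}}$ with all coefficients nonzero, and the relevant indices lie in the range covered by the hypothesis (an index larger than $2m\ge\rho$ would already force the weight above $\rho$). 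This contradicts the assumed linear independence, whence $\rho_2(P)>\rho$. Note that the paper itself does not prove this lemma but cites \cite[Chapter~15]{D14} and \cite[Lemma~4.3]{M14}; still, the dual-net argument above is the content your proposal would have to supply. Your observation that the weight constraint only caps the two largest indices in each coordinate, so that lower-order indices range freely, is accurate and important, but it belongs to the \emph{application} of the lemma (Lemma \ref{lem:dick_fhps}), not to its proof.
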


Recently, Dick \cite{D14} proved that digital nets over $\ZZ_2$ with large minimum Dick weight achieve the best possible order of the $L_p$ discrepancy for $1<p<\infty$ and for any number of dimensions. His result was generalized more recently by Markhasin \cite{M14} to digital nets over $\ZZ_b$ for arbitrary $b\ge 2$. We specialize the results of Dick \cite[Corollary~2.2]{D14} and Markhasin \cite[Theorem~1.8]{M14} on the $L_p$ discrepancy of digital nets for the two-dimensional case.

\begin{prop}\label{prop:DM}
Let $P$ be a digital net over $\ZZ_b$ consisting of $b^m$ points which satisfies $\rho_2(P)> 2m-t$ for some integer $0\le t\le 2m$. Then for all $1\le p<\infty$ there exists a constant $C_{p,t}$ which depends only on $t$ and $p$ such that we have
  \begin{align*}
    L_p(P) \le C_{p,t} \frac{\sqrt{m}}{b^m}.
  \end{align*}
\end{prop}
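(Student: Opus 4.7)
The plan is to obtain Proposition \ref{prop:DM} by specializing the general $s$-dimensional theorems of Dick \cite{D14} and Markhasin \cite{M14} to $s = 2$. Those works show that a digital net $P$ over $\ZZ_b$ in dimension $s$ consisting of $b^m$ points whose minimum Dick weight $\rho_2(P)$ exceeds $sm - t$ has $L_p$ discrepancy bounded by a constant multiple of $(\log N)^{(s-1)/2}/N$ for every $1 \le p < \infty$, where $N = b^m$. Setting $s = 2$, the hypothesis becomes $\rho_2(P) > 2m - t$ and the right-hand side becomes a constant times $\sqrt{\log N}/N$, which is on the order of $\sqrt{m}/b^m$; this matches exactly the form of the claimed inequality, so the proposition follows by direct transcription once the specialization is carried out carefully.

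To make this concrete I would proceed in three stages. First, match the notation: verify that the definition of the Dick weight $\mu_2$ in Definition \ref{def:dick_weight} agrees with the one used in \cite{D14,M14} (both reduce in two dimensions to the sum of the top two non-zero $b$-adic digit positions of each $k_j$), so that the minimum-weight condition reads identically. Second, recall the core technique driving the cited results. One expands the local discrepancy function $\Delta_P$ in the $b$-adic Haar basis on $[0,1)^2$, identifies each Haar coefficient with a dyadic rectangle, and computes it as the difference between a counting term and a volume term. The digital net structure, via the dual net $P^{\perp}$ of Definition \ref{def:dual}, forces these coefficients to vanish outside a structured index set determined by $C_1^{\top}\vec{k}_1 \oplus C_2^{\top}\vec{k}_2 = \bszero$; the minimum Dick weight hypothesis then restricts the non-vanishing indices to dyadic rectangles whose combined level is at least $2m - t$ and furnishes a uniform size bound at each level. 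Third, apply the $b$-adic Littlewood--Paley-type square-function characterization of the $L_p$ norm for $1 < p < \infty$ to reduce the $L_p$ discrepancy to a double sum over dyadic scales, which is elementary to bound and produces the factor $\sqrt{m}$. The endpoint $p = 1$ is handled by the trivial inequality $L_1(P) \le L_q(P)$ for any fixed $q > 1$.

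The main obstacle I expect is tracking the constant dependence. The cited multi-dimensional theorems bundle constants that depend on $s$, $t$, $p$, and sometimes the base $b$; specializing to $s = 2$ and verifying that the remaining constant depends only on $t$ and $p$ (as claimed in the statement) requires an explicit pass through the Haar coefficient estimates and the Littlewood--Paley inequalities used in \cite{D14,M14}. Dick's argument is stated for $b=2$, and its reliance on the Vilenkin-system version of Littlewood--Paley makes the constants fairly transparent; in Markhasin's general-$b$ version the corresponding constants need to be checked to be $b$-uniform, or otherwise absorbed into the $t$-dependence. Once this bookkeeping is confirmed, the rest is a direct transcription of the cited arguments in the two-dimensional case.
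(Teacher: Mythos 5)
Your proposal follows exactly the route the paper takes: the paper gives no independent proof of Proposition~\ref{prop:DM} but simply specializes Dick \cite[Corollary~2.2]{D14} and Markhasin \cite[Theorem~1.8]{M14} to the two-dimensional case, which is precisely your plan. Your additional sketch of the Haar/Littlewood--Paley machinery, the $L_1\le L_q$ reduction for the endpoint $p=1$, and the caveat about whether the constant is genuinely independent of $b$ are all sensible and consistent with the cited sources.
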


%%%%%%%%%%%%%%%%%%%%%%%%%%%%%%%%%%%%%%%%%%%%%%%%%%%%%%%%%%%
%%%%%%%%%%%%%%%%%%%%%%%%%%%%%%%%%%%%%%%%%%%%%%%%%%%%%%%%%%%
%%%%%%%%%%%%%%%%%%%%%%%%%%%%%%%%%%%%%%%%%%%%%%%%%%%%%%%%%%%
\section{Proof of the main result}

In order to prove Theorem \ref{thm:main}, it suffices from Propositions \ref{prop:NRT} and \ref{prop:DM} to prove that both the minimum NRT weight and the minimum Dick weight of the folded Hammersley point sets are large.

In the following, the generating matrices $C_1$ and $C_2$ always refer to those of \eqref{eq:matrix_fhps}. For $j=1,2$ and $1\le l\le n$, we denote by $\bsc_{j,l}$ the $l$-th row vector of $C_j$. We first show the linear independence properties which $C_1$ and $C_2$ holds.

\begin{lem}\label{lem:linear}
For $m,n\in \nat$ such that $n\ge 2m$, let $C_1$ and $C_2$ be the generating matrices of the two-dimensional folded Hammersley point set in base $b$. The following sets of the vectors are linearly independent over $\ZZ_b$:
\begin{enumerate}
\item $\{\bsc_{1,1},\dots,\bsc_{1,r},\bsc_{2,1},\dots,\bsc_{2,m-1-r}\}$ for $0\le r\le m-1$,
\item $\{\bsc_{1,1},\dots,\bsc_{1,m-2},\bsc_{2,r}\}$ and $\{\bsc_{2,1},\dots,\bsc_{2,m-2},\bsc_{1,r}\}$ for $0\le r\le m-2$,
\item $\{\bsc_{1,1},\dots,\bsc_{1,r},\bsc_{2,1},\dots,\bsc_{2,m-2-r},\bsc_{j,s}\}$ for $j=1,2$, $0\le r\le m-2$, and $m-1\le s\le n$,
\item $\{\bsc_{1,1},\dots,\bsc_{1,r_{1,2}},\bsc_{1,r_{1,1}},\bsc_{2,1},\dots,\bsc_{2,r_{2,2}},\bsc_{2,r_{2,1}}\}$ for $0< r_{1,2}<r_{1,1}\le m-2$ and $0< r_{2,2}<r_{2,1}\le m-2$ such that $r_{1,1}+r_{1,2}+r_{2,1}+r_{2,2}\le 2m-3$.
\end{enumerate}
\end{lem}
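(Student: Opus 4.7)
My plan is to argue directly from the column structure of the generating matrices $C_1$ and $C_2$ in \eqref{eq:matrix_fhps}. A direct inspection gives: for $1\le l\le m-1$, the row $\bsc_{1,l}$ has $b-1$ in column $1$, a $1$ in column $l+1$, and zeros elsewhere, while for $m\le l\le n$, $\bsc_{1,l}$ has $b-1$ only in column $1$. Symmetrically, for $1\le l\le m-1$, $\bsc_{2,l}$ has a $1$ in column $m-l$, $b-1$ in column $m$, and zeros elsewhere, while for $m\le l\le n$, $\bsc_{2,l}$ has $b-1$ only in column $m$. Hence each ``middle'' column $k\in\{2,\dots,m-1\}$ can receive contributions only from $\bsc_{1,k-1}$ or $\bsc_{2,m-k}$; and since $b-1$ is a unit in $\ZZ_b$, any equation of the shape $(b-1)x=0$ yields $x=0$.

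The strategy, uniformly across the four parts, is to consider a hypothetical vanishing linear combination of the chosen rows and read off the resulting equations column by column. Parts 1, 2 and 3 are straightforward: the sparsity above guarantees that in each case at most one middle column sees simultaneous contributions from a $\bsc_{1,l}$ and a $\bsc_{2,l'}$, so the other middle columns directly kill almost all coefficients; whatever is left (including the coefficient $\gamma$ of the extra row $\bsc_{j,s}$ in part~3) is then pinned to zero by the column-$1$ or column-$m$ equation, using the unit $b-1$. The boundary values $r=0$ or $r=m-1$ reduce to trivial sub-cases.

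Part 4 is where the genuine content lies. Write $A=\{1,\dots,r_{1,2},r_{1,1}\}$ and $B=\{1,\dots,r_{2,2},r_{2,1}\}$, and consider a vanishing combination $\sum_{l\in A}\alpha_l\bsc_{1,l}+\sum_{l\in B}\beta_l\bsc_{2,l}=0$. Since every index in $A\cup B$ is at most $m-2$, only the $\bsc_{1,l}$'s contribute to column $1$ and only the $\bsc_{2,l}$'s contribute to column $m$. Each middle column $k$ yields $[k-1\in A]\alpha_{k-1}+[m-k\in B]\beta_{m-k}=0$, so a coefficient is forced to $0$ unless the column is in \emph{conflict}, meaning both indicator brackets equal $1$. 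Conflicts are parameterised by $E=\{l\in A:m-1-l\in B\}$, and the crux is the claim $|E|\le 1$: if $l_1<l_2$ were two distinct elements of $E$, then using that $r_{1,1},r_{1,2}$ are the two largest elements of $A$ gives $l_1+l_2\le r_{1,1}+r_{1,2}$, and analogously $(m-1-l_1)+(m-1-l_2)\le r_{2,1}+r_{2,2}$; summing yields $2m-2\le r_{1,1}+r_{1,2}+r_{2,1}+r_{2,2}$, contradicting the hypothesis $r_{1,1}+r_{1,2}+r_{2,1}+r_{2,2}\le 2m-3$.

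Once $|E|\le 1$ is established, either all coefficients are already forced to zero (if $E=\emptyset$), or only the pair $(\alpha_{l^*},\beta_{m-1-l^*})$ corresponding to the unique $l^*\in E$ survives, linked by $\alpha_{l^*}+\beta_{m-1-l^*}=0$. The column-$1$ equation $(b-1)\sum_{l\in A}\alpha_l=0$ then collapses to $(b-1)\alpha_{l^*}=0$, forcing $\alpha_{l^*}=0$ and hence $\beta_{m-1-l^*}=0$. The principal obstacle throughout is the counting inequality $|E|\le 1$, which consumes precisely the tight budget $2m-3$ provided by the hypothesis of part~4 (and which in turn is exactly what Lemma~\ref{lem:dick} demands in order to control $\rho_2(P)$).
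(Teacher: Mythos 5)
Your proof is correct and follows essentially the same route as the paper: both arguments read off the vanishing linear combination column by column using the explicit sparse form of $C_1,C_2$ and exploit that $b-1$ is invertible modulo $b$ to kill the last surviving coefficient via the column-$1$ or column-$m$ equation. Your only real departure is organizational: in Item 4 you package everything into the single counting claim $\lvert E\rvert\le 1$ for the conflict set, whereas the paper runs an explicit case distinction on whether $r_{1,1}+r_{2,2}$ and $r_{1,2}+r_{2,1}$ reach $m-1$; the underlying use of the budget $r_{1,1}+r_{1,2}+r_{2,1}+r_{2,2}\le 2m-3$ to rule out two simultaneous collisions is identical.
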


\begin{proof}
Since the proofs of the first three items follow essentially the same argument, we only give the proof for Item 1. When $r=0$  ($r=m-1$, resp.), it is trivial that $\bsc_{1,1},\dots,\bsc_{1,m-1}$ ($\bsc_{2,1},\dots,\bsc_{2,m-1}$, resp.) are linearly independent over $\ZZ_b$. For $0< r< m-1$, let us consider the system of equations
  \begin{align}\label{eq:system1}
    a_{1,1}\bsc_{1,1}\oplus \dots\oplus a_{1,r}\bsc_{1,r}\oplus a_{2,1}\bsc_{2,1}\oplus \dots\oplus a_{2,m-1-r}\bsc_{2,m-1-r}=\bszero ,
  \end{align}
where $a_{1,1},\dots,a_{1,r},a_{2,1},\dots,a_{2,m-1-r}\in \ZZ_b$, and $\bszero$ denotes the vector consisting of $m$ zeros. Then we have
  \begin{align*}
    & a_{1,1}\oplus \dots \oplus a_{1,r} = 0 , \\
    & a_{1,1} = \dots =a_{1,r-1} = 0, \\
    & a_{1,r}\oplus a_{2,m-1-r} = 0, \\
    & a_{2,1}=\dots = a_{2,m-2-r} = 0, \\
    & a_{2,1}\oplus \dots \oplus a_{2,m-1-r} = 0 ,
  \end{align*}
from which it is obvious that only one solution $a_{1,1}=\dots=a_{1,r}=a_{2,1}=\dots=a_{2,m-1-r}=0$ satisfies \eqref{eq:system1}. Thus, $\bsc_{1,1},\dots,\bsc_{1,r},\bsc_{2,1},\dots,\bsc_{2,m-1-r}$ are linearly independent over $\ZZ_b$.

Next we give the proof for Item 4. The system of equations is given as
  \begin{align}\label{eq:system2}
    & a_{1,1}\bsc_{1,1}\oplus \dots\oplus a_{1,r_{1,2}}\bsc_{1,r_{1,2}}\oplus a_{1,r_{1,1}}\bsc_{1,r_{1,1}} \nonumber \\
    & \oplus a_{2,1}\bsc_{2,1}\oplus \dots\oplus a_{2,r_{2,2}}\bsc_{2,r_{2,2}}\oplus a_{2,r_{2,1}}\bsc_{2,r_{2,1}}=\bszero ,
  \end{align}
where $a_{1,1},\dots,a_{1,r_{1,2}},a_{1,r_{1,1}},a_{2,1},\dots,a_{2,r_{2,2}},a_{2,r_{2,1}}\in \ZZ_b$. If $r_{1,1}+r_{2,2}\ge m-1$ and $r_{1,2}+r_{2,1}\ge m-1$, the condition $r_{1,1}+r_{1,2}+r_{2,1}+r_{2,2}\le 2m-3$ does not hold. Thus, either $r_{1,1}+r_{2,2}$ or $r_{1,2}+r_{2,1}$ must be less than $m-1$.

Let us consider the case $r_{1,1}+r_{2,2}<m-1$ and $r_{1,2}+r_{2,1}<m-1$ first. If $r_{1,1}+r_{2,1}\le m-1$, it is trivial from Item 1 that the vectors $\bsc_{1,1},\dots,\bsc_{1,r_{1,1}},\bsc_{2,1},\dots,\bsc_{2,r_{2,1}}$ are linearly independent over $\ZZ_b$. Thus, we may suppose $r_{1,1}+r_{2,1}> m-1$. From \eqref{eq:system2}, we have
  \begin{align*}
    & a_{1,1}\oplus \dots \oplus a_{1,r_{1,2}}\oplus a_{1,r_{1,1}} = 0 , \\
    & a_{1,1} = \dots =a_{1,r_{1,2}} = a_{1,r_{1,1}} = 0, \\
    & a_{2,1}=\dots = a_{2,r_{2,2}} = a_{2,r_{2,1}} = 0, \\
    & a_{2,1}\oplus \dots \oplus a_{2,r_{2,2}}\oplus a_{2,r_{2,1}} = 0 .
  \end{align*}
Then, it is obvious that $a_{1,1}=\dots=a_{1,r_{1,2}}=a_{1,r_{1,1}}=a_{2,1}=\dots=a_{2,r_{2,2}}=a_{2,r_{2,1}}=0$, which implies the linear independence of the row vectors.

Finally, let us consider the case $r_{1,1}+r_{2,2}\ge m-1$ and $r_{1,2}+r_{2,1}<m-1$. From \eqref{eq:system2}, we have
  \begin{align*}
    & a_{1,1}\oplus \dots \oplus a_{1,r_{1,2}}\oplus a_{1,r_{1,1}} = 0 , \\
    & a_{1,1} = \dots =a_{1,r_{1,2}} = 0, \\
    & a_{1,r_{1,1}}\oplus a_{2,m-1-r_{1,1}} = 0, \\
    & a_{2,1}=\dots = a_{2,m-1-r_{1,1}-1} = 0, \\
    & a_{2,m-r_{1,1}}=\dots = a_{2,r_{2,2}} = 0, \\
    & a_{2,r_{2,1}} = 0, \\
    & a_{2,1}\oplus \dots \oplus a_{2,r_{2,2}}\oplus a_{2,r_{2,1}} = 0 .
  \end{align*}
Thus, only one solution $a_{1,1}=\dots=a_{1,r_{1,2}}=a_{1,r_{1,1}}=a_{2,1}=\dots=a_{2,r_{2,2}}=a_{2,r_{2,1}}=0$ satisfies \eqref{eq:system2}, so that the linear independence of the row vectors is shown. The case $r_{1,1}+r_{2,2}< m-1$ and $r_{1,2}+r_{2,1}\ge m-1$ can be proven in the same way.
\end{proof}
%%%%%%%%%%%%%%%%%%%%%%%%%%%%%%%%%%%%%%%%%%%%%%%%%%%%%%%%%%%

Using Lemma \ref{lem:NRT} with $\rho=m-1$ and Item 1 of Lemma \ref{lem:linear}, it is straightforward to prove that the minimum NRT weight of folded Hammersley point sets in base $b$ is large. Hence, we omit the proof.

\begin{lem}\label{lem:NRT_fhps}
For $m,n\in \nat$ such that $n\ge 2m$, let $P_{m,\tau_n}$ be the two-dimensional folded Hammersley point set in base $b$ consisting of $b^m$ points. We have $$\rho_1(P_{m,\tau_n})> m-1.$$
\end{lem}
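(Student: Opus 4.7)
The plan is to invoke Lemma \ref{lem:NRT} with $\rho = m-1$, feeding in the linear independence already established in Item 1 of Lemma \ref{lem:linear}. Lemma \ref{lem:NRT} requires us to show that for every pair $d_1, d_2 \in \nat_0$ with $d_1 + d_2 = m-1$, the first $d_1$ row vectors of $C_1$ together with the first $d_2$ row vectors of $C_2$ form a linearly independent set over $\ZZ_b$. Once this hypothesis is verified, the conclusion $\rho_1(P_{m,\tau_n}) > m-1$ follows immediately.

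To check the hypothesis, I would reparametrize by setting $r := d_1$, so that $d_2 = m-1-r$ and the constraints $d_1, d_2 \ge 0$ become precisely $0 \le r \le m-1$. The set of vectors under consideration is then exactly $\{\bsc_{1,1},\dots,\bsc_{1,r},\bsc_{2,1},\dots,\bsc_{2,m-1-r}\}$, which is the set appearing in Item 1 of Lemma \ref{lem:linear}. That lemma asserts precisely the linear independence we need, over the full admissible range of $r$. Hence the hypothesis of Lemma \ref{lem:NRT} holds with $\rho = m-1$.

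Since both inputs are already in place, there is no genuine obstacle: the proof is essentially a bookkeeping step matching the parameters of the two lemmas. The only minor subtlety worth noting is the edge cases $r=0$ and $r=m-1$, where one of the two blocks of vectors is empty; these are already subsumed by Item 1 of Lemma \ref{lem:linear} and cause no issue. Combining everything, we conclude $\rho_1(P_{m,\tau_n}) > m-1$, as claimed.
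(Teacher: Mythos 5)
Your proposal is correct and follows exactly the route the paper intends: the paper itself states that the lemma follows from Lemma \ref{lem:NRT} with $\rho=m-1$ combined with Item 1 of Lemma \ref{lem:linear}, and omits the details. Your reparametrization $r=d_1$, $d_2=m-1-r$ fills in precisely those omitted details, including the empty-block edge cases.
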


%%%%%%%%%%%%%%%%%%%%%%%%%%%%%%%%%%%%%%%%%%%%%%%%%%%%%%%%%%%
Next we prove that the minimum Dick weight of folded Hammersley point sets in base $b$ is also large, by using the results of Lemmas \ref{lem:dick} and \ref{lem:linear}.

\begin{lem}\label{lem:dick_fhps}
For $m,n\in \nat$ such that $n\ge 2m$, let $P_{m,\tau_n}$ be the two-dimensional folded Hammersley point set in base $b$ consisting of $b^m$ points. We have $$\rho_2(P_{m,\tau_n})> 2m-3.$$
\end{lem}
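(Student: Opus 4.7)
The approach is to invoke Lemma \ref{lem:dick} with $\rho = 2m - 3$. The lemma then reduces to showing that, for every pair $(v_1, v_2) \in \nat_0^2$ and every pair of strictly decreasing index sequences $1 \le i_{j, v_j} < \cdots < i_{j, 1} \le 2m$ ($j = 1, 2$) satisfying
\[
\sum_{l=1}^{\min(v_1, 2)} i_{1, l} + \sum_{l=1}^{\min(v_2, 2)} i_{2, l} \le 2m - 3,
\]
the selected rows $\bsc_{j, i_{j, l}}$ of the matrices in \eqref{eq:matrix_fhps} are linearly independent over $\ZZ_b$. I dispatch each configuration to one of the four Items of Lemma \ref{lem:linear}, using throughout that linear independence descends to subsets.

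The universal first step is an enlargement: for any $j$ with $v_j \ge 2$, all lower indices $i_{j, 3}, \ldots, i_{j, v_j}$ lie strictly below $i_{j, 2}$, so the chosen $C_j$-rows form a subset of $\{\bsc_{j, 1}, \ldots, \bsc_{j, i_{j, 2}}\} \cup \{\bsc_{j, i_{j, 1}}\}$. I then split on $(v_1, v_2)$. Cases with $v_1 v_2 = 0$ or with $\min(v_1, v_2) = 1$ reduce to configurations of the form ``initial block of one matrix plus at most one extra row'' and are either immediate or covered by Items 1--3 (Item 3 being designed precisely to absorb a single tail row with index $\ge m-1$). In the main case $v_1, v_2 \ge 2$, the sum bound precludes $i_{1, 1} \ge m - 1$ and $i_{2, 1} \ge m - 1$ simultaneously, since that would force the sum to be at least $2m$. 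If both top indices are generic ($\le m - 2$), the enlarged set is exactly the configuration of Item 4 with $(r_{j, 1}, r_{j, 2}) = (i_{j, 1}, i_{j, 2})$.

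The delicate point is the boundary subcase in which, say, $i_{1, 1} \ge m - 1$ and $i_{2, 1} \le m - 2$ (the symmetric case is identical), which lies outside Item 4 because of its constraint $r_{j,1} \le m-2$. My remedy is a further enlargement to ``first $i_{1, 2}$ rows of $C_1$, plus first $m - 2 - i_{1, 2}$ rows of $C_2$, plus the tail row $\bsc_{1, i_{1, 1}}$'', which is an Item 3 configuration with parameters $j = 1$, $r = i_{1, 2}$, $s = i_{1, 1}$. Containment of the original enlarged set requires $i_{2, 1} \le m - 2 - i_{1, 2}$, and this follows from the sum bound: since $i_{1, 1} \ge m - 1$ and $i_{2, 2} \ge 1$,
\[
i_{1, 2} + i_{2, 1} \le 2m - 3 - i_{1, 1} - i_{2, 2} \le m - 3.
\]
The main obstacle is precisely this careful matching of boundary configurations to Item 3; once that bookkeeping is in place, the four Items of Lemma \ref{lem:linear} collectively cover every configuration and Lemma \ref{lem:dick} yields $\rho_2(P_{m, \tau_n}) > 2m - 3$.
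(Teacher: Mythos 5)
Your proposal is correct and follows essentially the same route as the paper: invoke Lemma \ref{lem:dick} with $\rho=2m-3$, use the sum bound to rule out $i_{1,1},i_{2,1}\ge m-1$ simultaneously, and dispatch each configuration (after enlarging to initial blocks) to Items 1--4 of Lemma \ref{lem:linear}, with Item 3 absorbing the boundary case where one top index is at least $m-1$. The only difference is organizational (you split on $(v_1,v_2)$ first, the paper on whether $s_1$ or $s_2$ vanishes), and the key arithmetic deductions such as $i_{1,1}+i_{2,2}\le m-2$ are identical.
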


\begin{proof}
Let $\rho=2m-3$ in Lemma \ref{lem:dick}. Then it suffices to show that for all $1\le i_{1,v_1}<\dots<i_{1,1}\le 2m$ and $1\le i_{2,v_2}<\dots<i_{2,1}\le 2m$ such that
  \begin{align}\label{eq:dick_condition}
    s_1+s_2\le \rho \quad \text{where}\quad s_1:=\sum_{l=1}^{\min(v_1,2)}i_{1,l}\quad \text{and}\quad s_2:=\sum_{l=1}^{\min(v_2,2)}i_{2,l},
  \end{align}
the vectors $\bsc_{1,i_{1,v_1}},\dots,\bsc_{1,i_{1,1}},\bsc_{2,i_{2,v_2}},\dots,\bsc_{2,i_{2,1}}$ are linearly independent over $\ZZ_b$. We consider the following three disjoint cases: the case $s_1=0, s_2>0$, the case $s_1>0, s_2=0$, and the case $s_1,s_2>0$. Since the proof for the second case follows exactly the same arguments used for the first case, we only give the proofs for the first and third cases.

Let us consider the case $s_1=0, s_2>0$ first. Since the case $v_2=1$ is trivial, we suppose $v_2\ge 2$. If $i_{2,1}\le m-1$, the vectors $\bsc_{2,i_{2,v_2}},\dots,\bsc_{2,i_{2,1}}$ are linearly independent over $\ZZ_b$ for any $1\le i_{2,v_2}<\dots<i_{2,1}$, since $\bsc_{2,1},\dots,\bsc_{2,m-1}$ are linearly independent over $\ZZ_b$ as in Item 1 of Lemma \ref{lem:linear}. Thus, we may suppose $i_{2,1}> m-1$ below. Since $i_{2,1}+i_{2,2}\le \rho$, we must have $i_{2,2}< m-2$. Then, from Item 3 of Lemma \ref{lem:linear}, the vectors $\bsc_{2,i_{2,v_2}},\dots,\bsc_{2,i_{2,2}},\bsc_{2,i_{2,1}}$ are linearly independent over $\ZZ_b$ for any $1\le i_{2,v_2}<\dots<i_{2,2}< m-2$. Thus, the proof for the first case is complete.

Let us move on to the case $s_1,s_2>0$. If $i_{1,1},i_{2,1}\ge m-1$, then we have $s_1+s_2>\rho$, which violates the condition \eqref{eq:dick_condition}. Thus, either $i_{1,1}$ or $i_{2,1}$ must be less than $m-1$. In the following, we further split the case $s_1,s_2>0$ into three disjoint cases: the case $i_{1,1}< m-1\le i_{2,1}$, the case $i_{2,1}< m-1\le i_{1,1}$, and the case $i_{1,1},i_{2,1}< m-1$.

Let $i_{1,1}< m-1\le i_{2,1}$ first. Then from \eqref{eq:dick_condition} we have $0<i_{1,1}\le m-2$ if $v_2=1$ and $0<i_{1,1}+i_{2,2}\le m-2$ if $v_2\ge 2$. For $v_2=1$, $\bsc_{1,1},\dots,\bsc_{1,m-2},\bsc_{2,i_{2,1}}$ are linearly independent over $\ZZ_b$ as in Item 3 of Lemma \ref{lem:linear}. Thus, it means that the vectors $\bsc_{1,i_{1,1}},\dots,\bsc_{1,i_{1,v_1}},\bsc_{2,i_{2,1}}$ are linearly independent over $\ZZ_b$. For $v_2\ge 2$, it suffices to prove that $\bsc_{1,1},\dots,\bsc_{1,r},\bsc_{2,1},\dots,\bsc_{2,m-2-r},\bsc_{2,i_{2,1}}$ are linearly independent over $\ZZ_b$ for any $1\le r\le m-3$, which has been already proven in Item 3 of Lemma \ref{lem:linear}. Thus the proof for the case $i_{1,1}< m-1\le i_{2,1}$ is complete. The same argument can be applied to the case $i_{2,1}< m-1\le i_{1,1}$.

Let $i_{1,1},i_{2,1}< m-1$ next. Suppose that $v_1=1$. For given $1\le i_{1,1}<m-1$, $\bsc_{2,1},\dots,\bsc_{2,m-2}$ and $\bsc_{1,i_{1,1}}$ are linearly independent over $\ZZ_b$ as in Item 2 of Lemma \ref{lem:linear}. The same argument can be applied to the case $v_2=1$. Finally, suppose $v_1,v_2\ge 2$. Here we require that $i_{1,1}+i_{1,2}+i_{2,1}+i_{2,2}\le \rho \;(=2m-3)$ and $i_{1,1},i_{2,1}< m-1$. From this requirement for $i_{1,1},i_{1,2},i_{2,1},i_{2,2}$ and the result of Item 4 in Lemma \ref{lem:linear}, $\bsc_{1,1},\dots,\bsc_{1,i_{1,2}},\bsc_{1,i_{1,1}},\bsc_{2,1},\dots,\bsc_{2,i_{2,2}},\bsc_{2,i_{2,1}}$ are linearly independent over $\ZZ_b$. Thus, the proof for the case $i_{1,1},i_{2,1}< m-1$ is complete.
\end{proof}

\begin{rem}
Our proof used in this note is based on the fact that folded Hammersley point sets are digital nets with good generating matrices. As far as the author knows, other variants of Hammersley point sets whose $L_p$ discrepancy is of best possible order, such as digitally shifted ones and symmetrized ones, are no longer digital nets,  so that our proof technique cannot be applied to them.
\end{rem}

%%%%%%%%%%%%%%%%%%%%%%%%%%%%%%%%%%%%%%%%%%%%%%%%%%%%%%%%%%%
%%%%%%%%%%%%%%%%%%%%%%%%%%%%%%%%%%%%%%%%%%%%%%%%%%%%%%%%%%%
%%%%%%%%%%%%%%%%%%%%%%%%%%%%%%%%%%%%%%%%%%%%%%%%%%%%%%%%%%%

\end{document}